\documentclass[10pt, twocolumn]{article}
\usepackage[utf8]{inputenc}
\usepackage{authblk}
\usepackage[top=2cm, bottom=4.5cm, left=2cm, right=2cm]{geometry}
\usepackage{graphicx}
\usepackage{amsmath,amsfonts,amssymb}
\usepackage{listings}
\usepackage[T1]{fontenc}
\usepackage{listings}
\usepackage{color}
\usepackage[backref=page]{hyperref}
\usepackage{url}
\usepackage{caption}
\usepackage{subcaption}
\usepackage{dirtytalk}
\usepackage{amsthm}
\usepackage[capitalize]{cleveref} 
\usepackage{cancel}    
\usepackage{url}            
\usepackage{booktabs}       
\usepackage{microtype}      
\usepackage{setspace}
\usepackage{amssymb}
\usepackage{url}
\usepackage{lettrine}
\usepackage{framed}
\usepackage{color}
\usepackage{algorithm}
\usepackage{algorithmic}
\usepackage{overpic}
\usepackage{dsfont}
\usepackage{booktabs} 
\usepackage{framed}
\usepackage{graphicx}
\usepackage{dsfont}
\usepackage{framed}
\usepackage{bm}
\usepackage{bbm}
\usepackage{overpic}
\usepackage[svgnames]{xcolor}
\newtheorem{theorem}{Theorem}
\newtheorem{proposition}{Proposition}
\newtheorem{lemma}{Lemma}
\usepackage{xr}

\renewcommand{\vec}[1]{\bm{#1}}

\newcommand{\etal}{\textit{et al}.\@ }
\hypersetup{
	colorlinks=true,
	linkcolor=blue,
	urlcolor=red,
	citecolor=green,
	linkbordercolor={0 0 1}
}
\newcommand{\norm}[1]{\left\lVert#1\right\rVert}
\newcommand{\argmin}{\mathop{\mathrm{argmin}}}

\usepackage{algorithm}
\usepackage{algorithmic}

\def\R{\mathbb{R}}
\def\N{\mathbb{N}}




\usepackage{comment}

\allowdisplaybreaks[1]

\newcommand{\st}{\mathop{\mathrm{subject\,\,to}}}

\renewcommand{\vec}[1]{\bm{#1}}
\newcommand{\ind}[1]{\boldsymbol{1}\left(#1\right)}
\newcommand{\inv}[1]{\frac{1}{#1}}

\def\etal{\textit{et al.}}
\def\R{\mathbb{R}}

\newtheorem{definition}{Definition}
\newtheorem{assumption}{Assumption}

\title{Distributed Online Optimization with Byzantine Adversarial Agents\thanks{This work has been partially supported by DST-INSPIRE Faculty Grant, Department of Science and Technology (DST), Govt. of India (ELE/16-17/333/DSTX/RACH).}}
\date{}
\author[1]{Sourav Sahoo}
\author[1]{Anand Gokhale}
\author[1]{Rachel Kalpana Kalaimani}
\affil[1]{Department of Electrical Engineering, IIT Madras}
\affil[ ]{\texttt {rachel@ee.iitm.ac.in}}

\begin{document}
\maketitle


\begin{abstract}
  We study the problem of non-constrained, discrete-time, online distributed optimization in a multi-agent system where some of the agents do not follow the prescribed update rule either due to failures or malicious intentions. None of the agents have prior information about the identities of the faulty agents and any agent can communicate only with its immediate neighbours. At each time step, a locally Lipschitz strongly convex cost function is revealed locally to all the agents and the non-faulty agents update their states using their local information and the information obtained from their neighbours. We measure the performance of the online algorithm by comparing it to its offline version, when the cost functions are known \textit{apriori}. The difference between the same is termed as regret. Under sufficient conditions on the graph topology, the number and location of the adversaries, the defined regret grows sublinearly. We further conduct numerical experiments to validate our theoretical results. 
\end{abstract}

\section{INTRODUCTION}
In recent years, the emphasis on identifying decentralized optimization algorithms for distributed systems has gained much traction. Many problems in network systems may be posed in the framework of distributed optimization. Some common applications appear in problems involving sensor networks~\cite{SensorsRabbat2004}, localization and robust estimation~\cite{Durham2012}, and power networks~\cite{GarciaPower2012}.

In the classical distributed optimization problem, a network of agents attempts to minimize a cost function collaboratively. This cost function is given by a sum of cost functions which are only locally accessible to each agent. There is a vast amount of literature detailing approaches to solve the distributed optimization problem. These approaches are summarized in \cite{Yang2019survey} and the references therein. The classical distributed optimization problem assumes that the local cost function is fixed throughout the duration of the problem. However, in dynamically changing environments, the objective function of each agent may be time-varying. For example, in a tracking problem, the sensor readings may be influenced by noise. This problem can be tackled under the domain of online optimization. In the online optimization problem, at each time-step, an agent ``plays'' a vector $x(t)$. The environment then ``reveals'' the cost function $f_t(\cdot)$ and the agent incurs a cost $f_t(x(t))$. In such problems, the objective is to minimize the difference between the accumulative cost incurred and the cost incurred by a hypothetical agent which had knowledge about all the objective functions \textit{apriori}. Notably, methods such as dual averaging~\cite{hosseini2013online}, mirror descent~\cite{shahrampour2017distributed}, push sum~\cite{akbari2015distributed} have been used to solve online distributed optimization problems.

Given the large scale and safety-critical applications of distributed optimization based algorithms in many different engineering problems, there is a need to develop algorithms robust to adversarial attacks, where some agents in the system may be compromised. Recent studies have considered the effect of adversaries on consensus-based distributed optimization problems~\cite{su2015byzantine,sundaram2018distributed,kuwaranancharoen2020byzantine}. Although, it is not possible to identify the exact optimal point under such circumstances, the filtering algorithm presented in does give certain performance guarantees in the adversarial case~\cite{sundaram2018distributed}.

In our work, we consider the problem of online optimization, in the presence of byzantine adversaries. Our contributions are summarized as follows
\begin{itemize}
    \item We formulate the problem of online distributed optimization in the presence of byzantine adversaries. To our knowledge, we are the first paper to consider this problem
    \item We motivate and define a notion for regret based on the behavior of the adversarial agents. We show that the regret is sublinear for any finite time horizon $T$ and grows as $\mathcal O((\ln T)^2)$.

\end{itemize}
The paper is organized as follows: we discuss the relevant preliminaries and notations in \cref{sec:notations}. We formally describe the problem statement in \cref{sec:problem_statement} and the main results are detailed in \cref{sec:main}. The experimental
results are in \cref{sec:numerical_experiments}, and the conclusions follow in
\cref{sec:conclusion}. 

\textit{Notation:} We denote the set of real numbers, non-negative reals, and natural numbers by $\R$, $\R_{\geq0}$ and $\N$ respectively. The set of all $m \times n$ real-valued matrices is denoted by $\R^{m\times n}$ and $\norm{\cdot}$ is the Euclidean norm unless stated otherwise. $[k]$ denotes the set $\{1,2,\dots, k\}$ for $k\in\N$. A stochastic vector is a vector with non-negative numbers that add up to one. We denote $B(x, r)=\{y\in\R^n|\norm{x-y}\leq r\}$, the closed ball of radius $r$ centred at $x$. For any statement $X$, $\ind{X}$ is the indicator function which is 1 if $X$ is true and 0 otherwise. 

\section{PRELIMINARIES}\label{sec:notations}
\subsection{Graph Theory}
The communication network across the agents in a distributed setting is depicted using a graph $\mathcal G$.  An undirected graph $\mathcal G = (\mathcal V, \mathcal E)$ consists of a vertex set $\mathcal V$ and an edge set $\mathcal E \subseteq \mathcal V \times \mathcal V$. A graph is said to be undirected when every edge is bidirectional i.e. if $(i,j) \in \mathcal E$, then $(j,i) \in \mathcal E$. Each vertex represents an agent, so in a system with $n$ agents, $|\mathcal V| = n$. On indexing the agents from $\{1,2,\hdots,n\}$, a graph may be characterised using an adjacency matrix $A \in \R^{n \times n}_{\geq 0}$. This matrix is constructed such that $A_{ij} > 0$ iff $\{i,j\}\in \mathcal E$. For an undirected graph $A = A^\top$. A path from agent $i$ to agent $j$ is a  sequence of agents $v_{k_1},v_{k_2} ,\hdots,v_{k_l}$ such that $v_{k_1} = i, v_{k_l} = j$ and $(v_{k_r},v_{k_{r+1}}) \in \mathcal E$ for $1\leq r \leq l-1$. A graph is said to be connected if there exists a path between any two distinct vertices. The set of neighbours of an agent $i$ are defined as $\mathcal N_i = \{j \in \mathcal V \mid (i,j) \in \mathcal E\}$.

Next, we define some properties associated with graphs as presented in \cite{sundaram2018distributed}, which will be used to define constraints on the network structure in our problem formulation.

\begin{definition} [r-Reachable set]
For a given $r \in \N$, a subset of vertices $\mathcal S \in \mathcal V$ is said to be $r$-reachable if there exists a vertex $i \in \mathcal S$ such that $\mid \mathcal N_i \backslash S\mid \geq r$.
\end{definition}

\begin{definition} [r-robust graphs]
For some $r \in \N$, a graph $\mathcal G$ is said to be $r$-robust if for all pairs of disjoint nonempty subsets $\mathcal S_1,\mathcal S_2 \subset \mathcal V$, at least one of $\mathcal S_1$, $\mathcal{S}_2$ is $r$-reachable.
\end{definition}

\subsection{The Adversarial Model}

We assume that the set of adversarial agents is fixed, and the agents do not follow any prescribed algorithm. Further, these agents are capable of sending different values to each of their neighbours. This behaviour is referred to as Byzantine adversarial behavior in literature~\cite{Fischer1985Byzantine}. We also do not assume that the adversarial agents follow a certain pattern to achieve a goal, ensuring that our adversarial model is as general as possible and robust to all potential attacks. Regarding the distribution and the topology of the adversarial agents, we assume that each agent has at most $F$ adversaries among its neighbours. This is termed as $F$-local distribution of adversaries~\cite{leblanc2013resilient}. Formally, for each agent $i$, we assume that $|\mathcal N_i \cap \mathcal A | \leq F$.

\section{PROBLEM STATEMENT}\label{sec:problem_statement}

Consider a set of $N \geq 2$ agents, interacting via a network modelled by a graph $\mathcal{G}(\mathcal{V}, \mathcal{E})$. Let $\mathcal{V}=[N]$. Let $\mathcal{A}\subset\mathcal{V}$ be the set of Byzantine adversaries. Let the non-adversarial agents be denoted by $\mathcal{R} = \mathcal{V}\backslash\mathcal{A}$. Suppose there are $R\leq N$ non-adversarial agents. Without loss of generality, assume $\mathcal{R}=[R]$. None of the non-adversarial agents have knowledge regarding the identities of the adversarial agents. At each time step $t$, a regular agent $i$, chooses its state $x_i(t)\in \R$ based on some proposed algorithm. Each agent $i\in\mathcal{V}$ has access to a sequence of locally strongly convex cost functions $f_t^i: \R \rightarrow \R$, where $f_t^i$ is revealed to agent $i$ only at the \textit{end} of each time step $t\in[T]$, where $T$ is the time horizon. 

We first discuss the offline version of multi-agent optimization in the presence of adversaries. It has been shown that there exists no algorithm such that 
\begin{align}\label{eq:first}
    \min_{x\in\R}\inv{R}\sum_{i=1}^R f_i(x)
\end{align}{}
is solvable, where $f_i(\cdot)$'s are the local cost functions corresponding to the regular agents~\cite{su2015byzantine}. We say a problem is \textit{solvable} if there exists an algorithm that obtains the optimal point which satisfies all the constraints. Hence, a relaxed version of the problem has been proposed in~\cite{su2015byzantine} where a convex combination of the objective functions under additional constraints is minimized as opposed~\eqref{eq:first}. Ideally, we would like the convex combination coefficients, $\alpha_i=\inv{R},i\in\mathcal{R}$. However, it is possible that several elements of $\alpha$ are not non-zero. So, additional parameters $\beta$ and $\gamma$ were introduced to control the ``quality'' of $\alpha$, i.e., for some $\gamma\in\mathcal{R}$, at least $\gamma$ elements of $\alpha$ are lower-bounded by $\beta>0$.  This is formally stated in \eqref{eq:relaxed_opt}.
\begin{align}\label{eq:relaxed_opt}
    &\Tilde{x}\in\argmin_{x\in\R}\sum_{i=1}^R \alpha_if_i(x)\\
    \st\quad & \alpha_i\geq0 \text{ and }\sum_{i=1}^R \alpha_i=1, \forall i\in\mathcal{R},\nonumber
    \\\quad&\sum_{i=1}^R\ind{\alpha_i\geq\beta}\geq \gamma\nonumber
\end{align}

In the distributed online convex optimization setting, in the absence of adversaries, the performance of an algorithm is measured in terms of regret defined as follows. An 
\textit{agent's regret}~\cite{akbari2015distributed, hosseini2013online} is measured as the difference between the actual cost incurred and the optimal choice in hindsight, i.e, for agent $j$, 
\begin{align}\label{eq:offline_regret}
    \textsf{Reg}_T^j = \sum_{t=1}^T\sum_{i=1}^N f_t^i(x_j(t))-\sum_{t=1}^T\sum_{i=1}^N f_t^i(x^*)
\end{align}
where
\begin{align}\label{eq:offline_no_adv}
    x^* \in \argmin_{x\in\R} \sum_{t=1}^T\sum_{i=1}^N f_t^i(x)
\end{align}
The problem in \eqref{eq:offline_no_adv} is solvable~\cite{duchi2011dual, nedic2009distributed, nedic2014distributed}. Next, we provide a notion of regret when there are adversarial agents in the network. 

Combining the idea of solvability of the offline version of the problem and the conventional definition of regret in \eqref{eq:offline_regret}, we define \textit{agent regret} and \textit{network regret} similar to~\cite{akbari2015distributed}. Define $Y_T^{\beta, \gamma}$ as:
\begin{align}
    Y_T^{\beta, \gamma}:=&\left\{x : x\in\argmin_{x\in\R}\sum_{t=1}^T\sum_{j=1}^R \alpha_j(t)f_t^j(x), \right.\\\nonumber
    &\quad \left. 0\leq \alpha_i(t)\leq 1,  \sum_{i=1}^R\alpha_i(t)=1 ~\forall i\in\mathcal{R}, \forall t\in[T]\right.\\\nonumber
    &\quad\left.\sum_{i=1}^R\ind{\alpha_i(t)\geq\beta}\geq \gamma, \forall t\in[T]\right\}
\end{align}

\begin{definition}[Agent's Regret] 
Consider a sequence of cost functions $\{f_t^1,f_t^2,\dots, f_t^R \}_{i=1}^R$ and stochastic vectors $\{\alpha(t)\}_{t=1}^T$. Then, $\forall j\in \mathcal{R}$, for any $z^*\in Y_T^{\beta, \gamma}$, the agent's regret bound is given as:
\begin{align}\label{eq:agentregret}
    \textsf{Reg}_{\alpha,T}^j = \sum_{t=1}^T\sum_{i=1}^R \alpha_i(t)f_t^i(x_j(t))-\sum_{t=1}^T\sum_{i=1}^R \alpha_i(t)f_t^i(z^*),
\end{align}
\end{definition} 

\begin{definition}[Network Regret] 
Consider a sequence of cost functions $\{f_t^1,f_t^2,\dots, f_t^R \}_{i=1}^R$ and stochastic vectors $\{\alpha(t)\}_{t=1}^T$. Then, $\forall j\in \mathcal{R}$, for any $z^*\in Y_T^{\beta, \gamma}$, the network regret bound is given as:
\begin{align}\label{eq:networkregret}
    \textsf{Reg}_{\alpha, T} = \sum_{t=1}^T\sum_{i=1}^R \alpha_i(t)f_t^i(x_i(t))-\sum_{t=1}^T\sum_{i=1}^R \alpha_i(t)f_t^i(z^*),
\end{align}
\end{definition}

Note that in the regret definition for offline case in \eqref{eq:relaxed_opt}, the co-efficient vectors $\alpha$ are fixed. But for the online case since the objective functions change at each time step, the co-efficient vectors are assumed to be time-varying as given in
\eqref{eq:agentregret} and \eqref{eq:networkregret}.


Before we proceed with our main results, we make the following assumptions regarding the nature of the objective functions and the communication model.

\begin{assumption}\label{Assumption:Function} We consider the following assumptions regarding the objective functions:
 \begin{enumerate}
 \item All the (sub)-gradients $g$ are bounded, i.e., $\norm{g}\leq L, \forall t\in[T], i\in[N]$. This implies that $f_{t}^i$ is $L$-Lipschitz, i.e., $|f(x)-f(y)|\leq L\norm{x-y}$.
     \item $f_t^i(\cdot)$ is $\rho$-strongly convex, $\forall t\in[T], i\in[N]$ in $B(0, K_1)$ and $\cup_{i=1}^R\cup_{t=1}^T\argmin f_t^i\in B(0, K_2)$ where $K_1$ and $K_2$ are constants defined similarly as in~\cite{akbari2015distributed}.
 \end{enumerate}
 \end{assumption}
 \begin{assumption} We consider the following assumptions regarding the communication model: \label{Assumption:Network}
 \begin{enumerate}
 
     \item The underlying graph representing the network is static and undirected. 
     
     \item The set of adversarial agents $\mathcal{A}$ remains fixed for all the time steps.

     \item $F$-local Byzantine model of adversarial attack.
     \item The network is $(2F+1)$-robust.
     \item Each non zero value in the adjacency matrix describing the graph is lower bounded by some  $\kappa>0$.
 \end{enumerate}
 \end{assumption}

\section{MAIN RESULTS}
\label{sec:main}
We first present the optimization algorithm in \cref{alg:opt}. Most of the existing literature in distributed optimization involving adversaries have a \textit{filtering} step included in the algorithm~\cite{su2015fault, sundaram2018distributed, kuwaranancharoen2020byzantine} where the non-faulty agents sort the received values and reject the top $k$ and bottom $k$ values for some $k\in\N$. If there are less than $k$ values higher~(or respectively lower) than agent's value, it removes all such values. The intuitive idea is to reject the outlier values, which are more likely to disrupt the consensus step. We use distributed gradient descent for its simplicity and ease of implementation.


\begin{algorithm}
\caption{Byzantine-Resilient Online Distributed Gradient Descent}
\label{alg:opt}
\begin{algorithmic}
\STATE For each $i \in \mathcal R$, initialize $x_i(0)$.\\
\FOR{$t=1 \text{ to } T$ }
\STATE
Obtain ${\{f_t^i(x_i(t)), g_i(t)\}, g_i(t)\in\partial f_t^i(x_i(t))}$ from environment.

\STATE Send $x_i(t)$ to all neighbours.
\STATE  Sort the values obtained from neighbouring agents $\mathcal{N}_i$.
\STATE $\mathcal{U}_i(t) \gets $ Set of agents that sent the top $F$ values.
\STATE $\mathcal{L}_i(t) \gets $ Set of agents that sent the bottom $F$ values.
\STATE $\mathcal{J}_i(t)\gets (\mathcal{N}_i \backslash (\mathcal{L}_i(t) \cup \mathcal{U}_i(t))) \cup \{i\}$.

\STATE Update local state as 
\begin{equation}
    x_i(t+1) = \inv{|\mathcal{N}_i|-2F+1} \left(\sum_{j \in \mathcal{J}_i(t)}x_j(t)\right) -\eta(t) g_i(t) \label{eqn:UpdateLaw}
\end{equation}{}
\ENDFOR
\end{algorithmic}
\end{algorithm}



To do a mathematical analysis of \cref{alg:opt}, we need to represent the update law in \eqref{eqn:UpdateLaw} in an expression that involves only the non-faulty agents.
\begin{proposition}[{\cite[Proposition 5.1]{sundaram2018distributed}},\cite{vaidya2012matrix}]
Consider the network $\mathcal{G}=(\mathcal V,\mathcal E)$, with a set of regular nodes $\mathcal R$ and a set of adversarial nodes $\mathcal A$. Suppose that $\mathcal A$ is an $F$-local set, and that each regular node has at least $2F+1$ neighbors. Let $\vec{x}(0)\in\R^R$ denote the initial states of all non-faulty agents and $\vec{x}(t)$ denote their states at time step $t$. Then, the update rule \eqref{eqn:UpdateLaw} for each node $i\in\mathcal R$ is mathematically equivalent to 
\begin{align}\label{eq:UpdateLawEquiv}
    x_i(t+1) = M_i(t)\vec{x}(t) -  \eta(t)g_i(t)
\end{align}
where $M_i(t)$ is a row vector that satisfies
\begin{enumerate}
    \item $M_i(t)$ is a stochastic vector, i.e,  $\sum_{j=1}^R M_{ij}(t)=1$.
    \item $M_{ij}(t)\neq0$ \emph{only if} $(i,j)\in\mathcal{E}$ or $i=j$.
    \item $M_{ii} \geq \kappa$ and at least $|\mathcal N_i| - 2F$ of the other weights are lower bounded by $\frac{\kappa}{2}$ for some $\kappa>0$.
\end{enumerate}
It should be noted that $M_i(t)$ can depend on $\vec{x}(t)$ and the behaviour of the adversarial agents.
\end{proposition}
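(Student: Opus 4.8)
The plan is to start from the filtered update \eqref{eqn:UpdateLaw} and show that, once the top-$F$ and bottom-$F$ received values are discarded, every adversarial value that \emph{survives} the filter can be rewritten as a convex combination of two regular states. Substituting these expressions collapses the update into a combination over $\mathcal{R}$ alone, which is precisely the content of \eqref{eq:UpdateLawEquiv} and defines the row vector $M_i(t)$.

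First I would settle the counting and stochasticity. The surviving set $\mathcal{J}_i(t)$ contains $|\mathcal{N}_i| - 2F$ neighbours together with $i$ itself, so there are exactly $|\mathcal{N}_i| - 2F + 1$ terms, each carrying the uniform weight $w := (|\mathcal{N}_i| - 2F + 1)^{-1}$; hence the coefficients sum to one before any substitution, and convex recombination preserves this. Choosing $\kappa := \min_{i \in \mathcal{R}}(|\mathcal{N}_i| - 2F + 1)^{-1} > 0$ gives $w \ge \kappa$, which will anchor the lower bounds in property 3.

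The conceptual core is the sandwiching argument, which is where the $F$-local assumption and the hypothesis $|\mathcal{N}_i| \ge 2F+1$ enter. Fix an adversarial neighbour $k \in \mathcal{J}_i(t)$ that survived the filter. Since $x_k(t)$ was not removed as one of the top $F$ values, all $F$ discarded upper values are at least $x_k(t)$; because the adversary set is $F$-local and $k$ is itself an adversary lying outside this top block, at most $F-1$ of those upper values can be adversarial, so at least one belongs to a regular node $p \in \mathcal{U}_i(t) \cap \mathcal{R}$ with $x_p(t) \ge x_k(t)$. The symmetric argument on the bottom block produces a regular $q \in \mathcal{L}_i(t) \cap \mathcal{R}$ with $x_q(t) \le x_k(t)$, whence $x_k(t) = \lambda_k x_p(t) + (1-\lambda_k)x_q(t)$ for some $\lambda_k \in [0,1]$. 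Replacing every surviving adversarial term this way writes $x_i(t+1) + \eta(t)g_i(t)$ as a convex combination of $\{x_j(t) : j \in \mathcal{R}\}$, defining $M_i(t)$; since $p$, $q$, and the surviving regular neighbours are all neighbours of $i$, the nonzero entries of $M_i(t)$ sit only on $i$ and its regular neighbours, yielding properties 1 and 2.

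Finally, for property 3 I would track where the weight $w$ on each term lands. The self term gives $M_{ii} \ge w \ge \kappa$, and each surviving \emph{regular} neighbour retains its full weight $w \ge \kappa$. For each surviving adversarial slot, the mass $w$ is split as $w\lambda_k$ and $w(1-\lambda_k)$ between the regular nodes $p$ and $q$, so at least one of them receives weight $\ge w/2 \ge \kappa/2$. I expect the delicate point to be the bookkeeping that certifies $|\mathcal{N}_i| - 2F$ \emph{distinct} regular coordinates carry weight at least $\kappa/2$: one must assign to each of the $|\mathcal{N}_i| - 2F$ surviving slots a distinct regular node meeting the bound, which requires that the boundary nodes absorbing adversarial mass can be chosen without collision—feasible since at most $F$ of the $2F$ discarded values are adversarial, leaving at least $F$ discarded regular nodes available. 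This combinatorial assignment, rather than any analytic estimate, is the main obstacle, and a greedy matching of surviving adversarial slots to discarded regular nodes should close it.
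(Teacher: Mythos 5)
First, note that the paper itself offers no proof of this proposition: it is imported with citations to Sundaram--Gharesifard and Vaidya et al., so there is no in-paper argument to compare yours against. Your reconstruction follows the standard route of those references: sandwich each surviving adversarial value between a discarded regular value from the top block and one from the bottom block, rewrite it as a convex combination of those two regular states, and thereby collapse the filtered average into a stochastic combination over $\mathcal{R}$ alone. The sandwiching step is correct as you state it. If $a$ denotes the number of adversarial neighbours of $i$ that survive the filter, then at most $F-a$ adversaries remain among the $2F$ discarded values, so each of $\mathcal{U}_i(t)$ and $\mathcal{L}_i(t)$ separately contains at least $a$ regular nodes, and every regular node in $\mathcal{U}_i(t)$ (resp.\ $\mathcal{L}_i(t)$) has value at least (resp.\ at most) that of every surviving neighbour. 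Properties 1 and 2 follow as you describe.

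The one step that needs tightening is exactly the bookkeeping you flagged for property 3, and the fix you propose is not quite the right one. A greedy matching of each surviving adversarial slot to a single discarded regular node does not suffice, because slot $k$ deposits $w\lambda_k$ on $p_k$ and $w(1-\lambda_k)$ on $q_k$ and you cannot control which of the two endpoints receives the share of size at least $w/2$: if two slots shared an endpoint, both large shares could land on the shared node, and you would certify one distinct coordinate instead of two. What you actually need is a family of \emph{pairwise disjoint pairs} $(p_k,q_k)$ with $p_k\in\mathcal{U}_i(t)\cap\mathcal{R}$ and $q_k\in\mathcal{L}_i(t)\cap\mathcal{R}$, one pair per surviving adversarial slot; such a family exists precisely because each block separately contains at least $a$ regular nodes, as counted above. (Your justification, ``at least $F$ discarded regular nodes in total,'' is true but is not the relevant quantity; the per-block count is what guarantees disjoint pairs.) With disjoint pairs, each adversarial slot certifies a distinct regular coordinate of weight at least $w/2\geq\kappa/2$, which together with the $|\mathcal{N}_i|-2F-a$ surviving regular neighbours, each of weight exactly $w\geq\kappa$, yields the required $|\mathcal{N}_i|-2F$ off-diagonal entries. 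Modulo this refinement, your argument is the one in the cited sources.
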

So, the update law in \eqref{eq:UpdateLawEquiv} can be written for all the agents in a matrix form as
\begin{align}
    \vec{x}(t+1) = M(t)\vec{x}(t)-\eta(t)\vec{g}(t)
\end{align}
where $\vec{g}(t)=[g_1(t), g_2(t), \dots, g_R(t)]^\top$ and ${M(t)=[M_1(t)^\top, M_2(t)^\top, \dots, M_R(t)^\top]^\top}$.

Let $\vec{\Phi}(t,s) = \prod_{i=s}^{t}M(i)$ with $\vec{\Phi}(t,t)=M(t)$. Then, from \cite{nedic2010constrained}, 
\begin{equation}\label{prop:Phi_prop}
    \lim_{t\geq s, t\to\infty} \vec{\Phi}(t,s) = \mathbf{1}\vec q(s)^\top
\end{equation}{}
where $\vec{q}(s)$ is a stochastic vector. We now present three lemmas which are crucial for the main result of the paper. 
\begin{lemma}\label{lem:red_graph}
A reduced graph $\mathcal{H}$ of a graph $\mathcal{G}(\mathcal{V}, \mathcal{E})$ is defined as a subgraph obtained by removing all the faulty nodes from $\mathcal{V}$ and additionally removing up to $F$ edges at each non-faulty agent. For a graph satisfying \cref{Assumption:Network}, each of its reduced graphs is connected and has at least $\gamma\geq F+1$ nodes.
\end{lemma}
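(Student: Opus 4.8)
The plan is to treat the two assertions---connectivity and the lower bound on the number of nodes---separately, using the $(2F+1)$-robustness of $\mathcal{G}$ together with the $F$-local adversary model from \cref{Assumption:Network}. First observe that by construction a reduced graph $\mathcal{H}$ retains as its vertex set exactly the regular nodes $\mathcal{R}$: only adversarial vertices and up to $F$ incident edges per regular node are deleted. Hence $\mathcal{H}$ has $R$ vertices, and it suffices to show (i) $\mathcal{H}$ is connected and (ii) $R \geq F+1$.

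For connectivity, I would argue by contradiction. Suppose $\mathcal{H}$ is disconnected; then its vertex set $\mathcal{R}$ can be partitioned into two disjoint nonempty sets $\mathcal{S}_1$ and $\mathcal{S}_2$ with no edge of $\mathcal{H}$ between them. Viewing $\mathcal{S}_1,\mathcal{S}_2$ as disjoint nonempty subsets of $\mathcal{V}$, $(2F+1)$-robustness guarantees that at least one of them is $(2F+1)$-reachable in $\mathcal{G}$; say $\mathcal{S}_1$ is, so there is a node $i\in\mathcal{S}_1$ with $|\mathcal{N}_i\setminus\mathcal{S}_1|\geq 2F+1$. These neighbours lie in $\mathcal{S}_2\cup\mathcal{A}$, and since $\mathcal{A}$ is $F$-local at most $F$ of them are adversarial; hence at least $F+1$ of them are regular neighbours of $i$ belonging to $\mathcal{S}_2$. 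Because forming $\mathcal{H}$ deletes at most $F$ edges at $i$, at least one edge from $i$ into $\mathcal{S}_2$ must survive in $\mathcal{H}$, contradicting the assumption that no edge crosses the cut. Therefore $\mathcal{H}$ is connected.

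For the node count, I would exploit the same robustness/locality pair to lower bound $R$. Robustness forces a minimum degree in $\mathcal{G}$: applying the definition to the pair $\{\{v\},\mathcal{V}\setminus\{v\}\}$ shows (for $2F+1\geq 2$) that the singleton must be the reachable set, so every vertex has at least $2F+1$ neighbours---this is precisely the hypothesis of the Proposition quoted above. Fix any regular node $i$; of its $\geq 2F+1$ neighbours at most $F$ are adversarial, so it has at least $F+1$ regular neighbours. Together with $i$ itself these are distinct elements of $\mathcal{R}$, giving $R\geq F+2\geq F+1$. Thus every reduced graph has at least $F+1$ nodes, and we may take $\gamma\geq F+1$.

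I expect the delicate point to be the edge-removal bookkeeping in the connectivity step: one must verify that the $\leq F$ edges deleted at the reachable node $i$ cannot eliminate all $\geq F+1$ regular links into $\mathcal{S}_2$, and that the ``up to $F$ edges per regular node'' deletion convention is exactly what is inherited from the $2F$-value filtering in \cref{alg:opt} once the at-most-$F$ adversarial neighbours have already been removed. Making this counting precise, and checking the symmetry that lets us assume without loss of generality that $\mathcal{S}_1$ (rather than $\mathcal{S}_2$) is the reachable set, is the main obstacle; the remaining steps follow directly from the definitions.
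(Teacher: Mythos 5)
Your proof is correct, but it takes a genuinely different route from the paper's. The paper does not argue from first principles: it composes two cited results, namely (i) removing an $F$-local adversarial set from a $(2F+1)$-robust graph leaves an $(F+1)$-robust subgraph $\mathcal{G}_{\mathcal{R}}$ (LeBlanc \etal), from which the node count $\gamma\geq F+1$ falls out immediately, and (ii) an $r$-robust graph remains connected after deleting up to $r-1$ edges per node (Sundaram--Gharesifard), applied with $r=F+1$. Your cut-based contradiction argument effectively inlines both of these: the step ``$\geq 2F+1$ neighbours outside $\mathcal{S}_1$, minus at most $F$ adversarial, leaves $\geq F+1$ regular neighbours, of which at most $F$ can be filtered'' is precisely the composition of (i) and (ii) carried out by hand, and your minimum-degree argument for $R\geq F+1$ is a self-contained substitute for invoking $(F+1)$-robustness of $\mathcal{G}_{\mathcal{R}}$. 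What your version buys is a proof that is elementary and verifiable without chasing references; what the paper's version buys is brevity and reuse of the intermediate fact that $\mathcal{G}_{\mathcal{R}}$ is itself $(F+1)$-robust, which is a stronger structural statement than connectivity of each reduced graph. The one caveat you correctly flag --- whether ``up to $F$ edges removed at each non-faulty agent'' is a per-node budget on \emph{incident} deleted edges or whether deletions chosen by a neighbour $j$ can also consume edges at $i$ --- is a definitional looseness present in the paper as well (in the source literature the reduced graph is directed and the conclusion is rootedness rather than undirected connectivity), so it is not a gap specific to your argument; under the natural per-node-incident-budget reading your counting is airtight.
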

\begin{proof}
Let $\mathcal{G}_\mathcal{R}(\mathcal{R}, \mathcal{E}_\mathcal{R})$ denote the subgraph of $\mathcal{G}$ consisting only non-faulty nodes. Then, for a network $\mathcal{G}$ satisfying \cref{Assumption:Network}, $\mathcal{G}_\mathcal{R}$ is $(F+1)$-robust~\cite{leblanc2013resilient}. So, trivially, the number of nodes in $\mathcal{G}_\mathcal{R}$ is at least $F+1$. Furthermore, if a graph is $r$-robust, then the resulting graph after removing upto $r-1$ edges from each node is connected~\cite{sundaram2018distributed}. Combining both the statements, we conclude that the reduced graph of $\mathcal{G}$ is connected with at least $F+1$ nodes. 

\end{proof}

From \cite[Lemma 5]{su2015fault}, we have that for any fixed $s$, there exists at least $\gamma$~(as defined in \cref{lem:red_graph}) elements in $\vec{q}(s)$ that are lower bounded by $\xi^R$, for some $\xi\in(0,1)$, i.e,
\begin{align}\label{eq:q_LB}
    \sum_{i=1}^R\ind{q_i(s)\geq\xi^R}\geq\gamma
\end{align}
Define 
\begin{equation}
\label{eqn:consensus_dyn}
    y(t) := \langle \vec{q}(t), \vec{x}(t)\rangle
\end{equation}
as the \textit{convex combination} of the current states. It mimics the ``average'' state $\Bar{x}(t)=\inv{N}\sum_{i=1}^N x_i(t)$ considered in the case of distributed optimization without adversarial nodes. Furthermore, it is not difficult to show that the update rule for $y(t)$ is given by
\begin{align}\label{eq:update_y}
    y(t+1) &= y(t) - \eta(t)\vec{q}(t+1)^\top \vec{g}(t)
\end{align}
\begin{lemma}\label{lem:xi-yt_ub}
Consider the network $\mathcal G=(\mathcal{V},\mathcal E)$. Suppose that the convex functions $f_t^i,\ i\in \mathcal V$ are $L$-Lipschitz. Let the update law be given by:
\begin{align}
    \vec{x}(t+1) = M(t)\vec{x}(t) - \eta(t) \vec{g}(t) \label{eqn:UpdateLawProjection}
\end{align}

Suppose, there exists a constant $\kappa > 0$ such that at each timestep $t\in [T]$, the diagonal elements of the weight matrix $M(t)$ is lower bounded by $\kappa$ and the network contains a rooted subgraph whose edge weights are lower bounded by $\kappa$. Let $y(t)$ be the sequence defined as per \eqref{eqn:consensus_dyn}. If $\eta(t) \rightarrow 0$ as $t\rightarrow \infty$, then
    \begin{equation}
    \begin{split}
        \norm{x_i(k) - y(k)} &\leq C\theta^{k-1}\sum_{j=1}^R\norm{x_j(0)} \\
        &+RCL\sum_{r=0}^{k-2}\eta(r)\theta^{k-r-2} +2\eta(k-1)L
    \end{split}
    \end{equation}
    for some $C>0$ and $\theta\in[0,1)$. It is to be noted that the upper bound is independent of $i$ and depends only on $k$. We denote this upper bound by $\zeta(k)$ which we refer several times later in this paper.
\end{lemma}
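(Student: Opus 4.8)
The plan is to write both $x_i(k)$ and $y(k)$ in closed form by unrolling the recursion \eqref{eqn:UpdateLawProjection}, and then to estimate their difference using the \emph{geometric} rate of convergence underlying \eqref{prop:Phi_prop}. First I would iterate \eqref{eqn:UpdateLawProjection} back to the initial condition to obtain
\begin{align*}
\vec{x}(k) = \vec{\Phi}(k-1,0)\vec{x}(0) - \sum_{r=0}^{k-1}\eta(r)\,\vec{\Phi}(k-1,r+1)\,\vec{g}(r),
\end{align*}
with the convention $\vec{\Phi}(k-1,k)=I$ for the final term, so that $x_i(k)=\vec{e}_i^\top\vec{x}(k)$ for the $i$-th standard basis vector $\vec{e}_i$.

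For $y(k)=\langle\vec{q}(k),\vec{x}(k)\rangle$ I would exploit the semigroup identity $\vec{\Phi}(t,s)=\vec{\Phi}(t,\tau)\vec{\Phi}(\tau-1,s)$; letting $t\to\infty$ and invoking \eqref{prop:Phi_prop} yields the backward consistency relation $\vec{q}(s)^\top=\vec{q}(\tau)^\top\vec{\Phi}(\tau-1,s)$. With $\tau=k$, $s=r+1$ this gives $\vec{q}(k)^\top\vec{\Phi}(k-1,r+1)=\vec{q}(r+1)^\top$ for every $0\le r\le k-1$ (the case $r=k-1$ being trivial). Left-multiplying the unrolled expression by $\vec{q}(k)^\top$ therefore collapses to
\begin{align*}
y(k) = \vec{q}(0)^\top\vec{x}(0) - \sum_{r=0}^{k-1}\eta(r)\,\vec{q}(r+1)^\top\vec{g}(r),
\end{align*}
which is precisely the integrated form of \eqref{eq:update_y}. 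Subtracting leaves
\begin{align*}
x_i(k)-y(k) = \left[\vec{e}_i^\top\vec{\Phi}(k-1,0)-\vec{q}(0)^\top\right]\vec{x}(0) - \sum_{r=0}^{k-1}\eta(r)\left[\vec{e}_i^\top\vec{\Phi}(k-1,r+1)-\vec{q}(r+1)^\top\right]\vec{g}(r).
\end{align*}

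The decisive ingredient, and the step I expect to be the main obstacle, is a \emph{uniform geometric} sharpening of \eqref{prop:Phi_prop}: under the hypotheses that the diagonal entries of each $M(t)$ and the edges of a rooted subgraph are lower bounded by $\kappa$, the backward products of stochastic matrices are ergodic at a uniform rate, so there exist $C>0$ and $\theta\in[0,1)$ with $\left|[\vec{\Phi}(t,s)]_{ij}-q_j(s)\right|\le C\theta^{\,t-s}$ for all $i,j$. Rather than re-derive this, I would cite the standard ergodicity/scrambling-coefficient argument for products of stochastic matrices; the structural assumptions are exactly what make it applicable. Applying the estimate entrywise to the first bracket bounds the initial-condition term by $C\theta^{k-1}\sum_{j=1}^R\norm{x_j(0)}$, and applying it to each bracket with $0\le r\le k-2$, together with $\abs{g_j(r)}\le L$ from \cref{Assumption:Function}, bounds each bulk gradient term by $RCL\,\eta(r)\theta^{k-r-2}$. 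Summing these gives the first two contributions.

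The third summand arises from the boundary term $r=k-1$, where $\vec{\Phi}(k-1,k)=I$ does \emph{not} contract and no geometric factor is available. There the bracket equals $\vec{e}_i^\top-\vec{q}(k)^\top$, so the term is $\eta(k-1)\bigl(g_i(k-1)-\vec{q}(k)^\top\vec{g}(k-1)\bigr)$; since $\abs{g_i(k-1)}\le L$ and $\abs{\vec{q}(k)^\top\vec{g}(k-1)}\le L$ because $\vec{q}(k)$ is stochastic and gradients are bounded, this is at most $2\eta(k-1)L$. Adding the three estimates produces the claimed bound $\zeta(k)$, which is visibly independent of the agent index $i$ as asserted.
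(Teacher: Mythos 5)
Your proposal is correct and follows essentially the same route as the paper's proof: both unroll the recursion to closed form, use the backward consistency relation $\vec q(s)^\top=\vec q(s+1)^\top M(s)$ (you iterate it into $\vec q(k)^\top\vec\Phi(k-1,r+1)=\vec q(r+1)^\top$), invoke the geometric estimate $\abs{[\vec\Phi(t,s)]_{ij}-q_j(s)}\le C\theta^{t-s}$ from the ergodicity of the backward products (the paper cites Nedi\'c et al.\ for this), and isolate the non-contracting $r=k-1$ boundary term to get $2\eta(k-1)L$. No gaps; your write-up is if anything slightly cleaner in making the $\vec\Phi(k-1,k)=I$ boundary case explicit.
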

\begin{proof}
Consider the dynamics of $y(k)$, and use the update law from the equation \eqref{eqn:UpdateLawProjection}.
\begin{align*}
    y(k+1)  &=  \vec q(k+1)^\top \vec x(k+1)\\
    &=  \vec q(k+1)^\top (M(k)\vec x(k) - \eta(k) \vec g(k)))\\ &\stackrel{(a)}{=}  \vec q(k)^\top \vec x(k) - \eta(k)\vec q(k+1)^\top \vec g(k)\\
    \implies y(k+1) &=y(k)- \eta(k) \vec q(k+1)^\top \vec g(k)\stepcounter{equation}\tag{\theequation}\label{eq:y-dyanmics}
\end{align*}{}
where $(a)$ holds because by definition of $\vec{\Phi}$, we have $\vec q(s)^\top = \vec q(s+1)^\top M(s)$.
 Considering the dynamics of $x_i(k)$ and $y(k)$ over several time steps, starting from  time $s$, and ending at time $k+1$, 
 For $i\in\mathcal{R}$,
 \begin{align}
  x_i(k+1) &= [\vec{\Phi}(k, s)\vec{x}(s)]_i\nonumber\\
  &\quad-\sum_{r=s}^{k-1}\eta(r)\sum_{j=1}^R\vec{\Phi}(k, r+1)_{ij}g_j(r)-\eta(k)g_i(k)
 \end{align}
 Similarly, using the update step in \eqref{eq:y-dyanmics} for $y(k')$ for $s\leq k'\leq k+1$ recursively,
\begin{align*}
    y(k+1)&= \vec{q}(s)^\top\vec{x}(s) - \sum_{r=s}^{k-1}\eta(r) \sum_{j=1}^R q_j(r+1)g_j(r)\nonumber\\& -\eta(k)\sum_{j=1}^R q_j(k+1) g_j(k)
\end{align*}
Setting $s = 0$, and using the triangle law,
\begin{align*}
    &\norm{x_i(k) - y(k)} \\
    &\leq \norm{\sum_{j=1}^R x_j(0)(\vec{\Phi}(k-1,0)_{ij} - q_j(0))} \\
    &\quad+ \sum_{r=0}^{k-2}\eta(r)\norm{\sum_{j=1}^R g_j(r)(\vec{\Phi}(k-1,r+1)_{ij} - q_j(r+1))} \nonumber \\ & +\eta(k-1)\norm{g_i(k-1)} + \eta(k-1) \norm{\sum_{j=1}^R q_j(k) g_j(k-1)} \nonumber\\
    &\leq \sum_{j=1}^R\norm{x_j(0)}\norm{\vec{\Phi}(k-1,0)_{ij} - q_j(0)}\\
    &\quad+\sum_{r=0}^{k-2}\eta(r)\sum_{j=1}^R\norm{\vec g_j(r)}\norm{\vec{\Phi}(k-1,r+1)_{ij} - q_j(r+1)} \nonumber \\ 
    &\quad+\eta(k-1)\norm{g_i(k-1)} + \eta(k-1) \sum_{j=1}^R q_j(k)\norm{g_j(k-1)}\stepcounter{equation}\tag{\theequation}\label{eq:xiytub}
\end{align*}
From Nedic~\etal~\cite{nedic2010constrained}, we get that for some $C>0,~\theta\in[0,1)$, $\norm{\vec{\Phi}(k,s)_{ij}-q_j(s)} \leq C\theta^{k-s}$. Further, $\norm{g_i(k)}\leq L$ and $\sum_{j=1}^R q_j(k)=1$. Hence, by upper bounding the terms of \eqref{eq:xiytub} appropriately, for $k\geq2$, we get the result of \cref{lem:xi-yt_ub}.

\end{proof}

\begin{lemma}\label{lem:sublinearity}
Consider the conditions mentioned in \cref{lem:xi-yt_ub}. Then, for learning rate $\eta(t)=\inv{\rho t}, t\geq1$, $\eta(0)=0$ and any finite time horizon $T$,
\begin{equation}\label{eq:lem-sublinearity}
\begin{split}
&\sum_{t=1}^T\norm{x_i(t) - y(t)} \leq C_1 + C_2(1+\ln T), \forall i, \text{ where }\\
&C_1 = \frac{C}{1-\theta}\sum_{j=1}^R\norm{x_j(0)},~~
C_2 = \frac{2L}{\rho} + \frac{RCL}{\rho(1-\theta)}
\end{split}
\end{equation}
\end{lemma}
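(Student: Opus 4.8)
The plan is to simply sum the per-step bound $\zeta(k)$ established in \cref{lem:xi-yt_ub} over $t=1$ to $T$ and control each of the three resulting pieces separately. Writing
\[
\sum_{t=1}^T\norm{x_i(t)-y(t)} \leq \sum_{k=1}^T\zeta(k),
\]
I would split the right-hand side into the three contributions coming from the initial-condition term $C\theta^{k-1}\sum_{j}\norm{x_j(0)}$, the convolution term $RCL\sum_{r=0}^{k-2}\eta(r)\theta^{k-r-2}$, and the final term $2\eta(k-1)L$, and bound each one.

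The first term is the easiest: summing the geometric factor gives $\sum_{k=1}^T\theta^{k-1}=\frac{1-\theta^T}{1-\theta}\leq\inv{1-\theta}$, so this piece is bounded by $\frac{C}{1-\theta}\sum_{j=1}^R\norm{x_j(0)}=C_1$, which accounts exactly for the constant $C_1$ in the statement. The third term, using $\eta(t)=\inv{\rho t}$ together with $\eta(0)=0$, becomes $\frac{2L}{\rho}\sum_{j=1}^{T-1}\inv{j}$, and the standard harmonic-sum bound $\sum_{j=1}^{m}\inv{j}\leq 1+\ln m$ yields $\frac{2L}{\rho}(1+\ln T)$, supplying the $\frac{2L}{\rho}$ part of $C_2$.

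The step I expect to be the main obstacle, and the only one requiring real care, is the convolution term. The idea is to interchange the order of summation: the constraints $0\leq r\leq k-2$ and $1\leq k\leq T$ let $r$ range over $0$ to $T-2$ while $k$ ranges over $r+2$ to $T$, giving
\[
\sum_{k=1}^T\sum_{r=0}^{k-2}\eta(r)\theta^{k-r-2} = \sum_{r=0}^{T-2}\eta(r)\sum_{m=0}^{T-r-2}\theta^{m} \leq \inv{1-\theta}\sum_{r=1}^{T-2}\eta(r),
\]
where I substituted $m=k-r-2$, bounded the inner geometric sum by $\inv{1-\theta}$, and dropped the $r=0$ term since $\eta(0)=0$. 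Substituting $\eta(r)=\inv{\rho r}$ and again invoking the harmonic bound gives $\frac{1}{\rho(1-\theta)}(1+\ln T)$, so this piece contributes $\frac{RCL}{\rho(1-\theta)}(1+\ln T)$.

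Adding the three bounds produces $C_1 + \left(\frac{2L}{\rho}+\frac{RCL}{\rho(1-\theta)}\right)(1+\ln T) = C_1 + C_2(1+\ln T)$, which is exactly the claimed inequality; since every bound used was independent of the agent index $i$, the result holds for all $i$. The care points throughout are handling the empty inner sum at $k=1$, correctly shifting indices so that $\eta(0)=0$ removes boundary terms, and ensuring the geometric and harmonic estimates are applied with the right endpoints.
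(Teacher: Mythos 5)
Your proposal is correct and follows essentially the same route as the paper: sum the per-step bound $\zeta(k)$ from \cref{lem:xi-yt_ub}, bound the geometric term by $\frac{C}{1-\theta}\sum_j\norm{x_j(0)}$, the $2\eta(k-1)L$ term by the harmonic estimate $\frac{2L}{\rho}(1+\ln T)$, and the convolution term by interchanging the order of summation and dominating the inner geometric sum by $\inv{1-\theta}$ (the paper's \eqref{eq:sum_of_steps_2}). Your treatment of the index ranges and the $\eta(0)=0$ boundary term is, if anything, slightly more explicit than the paper's.
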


\begin{proof}
Consider $\eta(t)=\inv{\rho t},t\geq1$ and $\eta(0)=0$. Observe,
\begin{align}\label{eq:sum_of_steps}
    \sum_{s=1}^t \eta(t) \leq \inv{\rho}\left(1+\int_{1}^t\inv{z}dz\right)= \inv{\rho}(1+\ln t)
\end{align}


If $\eta(t)=\inv{\rho t}$ and $\theta\in[0, 1)$, then,
\begin{align}
    &\sum_{k=1}^t \sum_{r=0}^{k-2}\eta(r)\theta^{k - r - 2}\leq\sum_{r=0}^{t}\eta(r)\sum_{s=0}^\infty \theta^{s}\stackrel{\eqref{eq:sum_of_steps}}{\leq}\frac{(1+\ln t)}{\rho (1-\theta)}\label{eq:sum_of_steps_2}
\end{align}

\begin{align*}
    &\sum_{k=1}^t\norm{x_i(k) - y(k)}\\ &\leq \sum_{k=1}^t\left\{C\theta^{k-1}\sum_{j=1}^R\norm{x_j(0)}\right.\\
    &\quad\left.+ RCL\sum_{r=0}^{k-2}\eta(r)\theta^{k-r-2} +2\eta(k-1)L\right\}\\
    &\leq \frac{C}{1-\theta}\sum_{j=1}^R\norm{x_j(0)}\\
    &\quad+\sum_{k=1}^t\left\{ RCL\sum_{r=0}^{k-2}\eta(r)\theta^{k - r - 2} +2\eta(k-1)L\right\}\\
    &\stackrel{\eqref{eq:sum_of_steps}}{\leq} \frac{C}{1-\theta}\sum_{j=1}^R\norm{x_j(0)}+\frac{2L}{\rho}(1+\ln t)\\
    &\quad+RCL\sum_{k=1}^t \sum_{r=0}^{k-2}\eta(r)\theta^{k - r - 2}\\
    &\stackrel{\eqref{eq:sum_of_steps_2}}{\leq} \frac{C}{1-\theta}\sum_{j=1}^R\norm{x_j(0)}+\frac{2L}{\rho}(1+\ln t)+\frac{RCL(1+\ln t)}{\rho(1-\theta)}
\end{align*}
Hence, by grouping the constant terms and the coefficients of $(1+\ln t)$, we get the result.

\end{proof}

We state a theorem regarding the sublinearity of the network regret in \cref{thm:bigthm}. 

\begin{theorem}[Sublinear Network Regret Bound]\label{thm:bigthm} Under \cref{Assumption:Function} and \cref{Assumption:Network}, with a learning rate $\eta(t)=\inv{\rho t}$, the network regret defined in \eqref{eq:networkregret} with $\alpha(t)=\vec{q}(t+1)$, defined in \eqref{prop:Phi_prop}, is sublinear. Precisely,
  \begin{align*}
    \textsf{Reg}_{\alpha,T}&\leq A_1 + A_2(1+\ln T) + A_3(1+\ln T)^2
\end{align*}
where
\begin{equation}\label{eq:a1a2a3}
    \begin{split}
        A_1&=LC_1+\rho C_1\norm{y(0) -z^*}+\frac{\rho}{2}\norm{y(0) -z^*}^2\\
        A_2&=L(C_1+C_2)+\frac{L^2}{2\rho}+(L+\rho C_2)\norm{y(0) -z^*}\\
        A_3&=\frac{L^2}{2\rho}+LC_2
    \end{split}
\end{equation}
and $C_1$ and $C_2$ are the constants mentioned in \cref{lem:sublinearity} and $z^*\in Y_T^{\beta, \gamma}$. 
\end{theorem}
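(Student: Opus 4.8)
<br>

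The plan is to bound the network regret by decomposing it through the auxiliary ``consensus'' sequence $y(t)$ defined in \eqref{eqn:consensus_dyn}, which plays the role of the average state in the adversary-free setting. I would begin by writing the per-timestep regret contribution $\sum_{i=1}^R \alpha_i(t) f_t^i(x_i(t)) - \sum_{i=1}^R \alpha_i(t) f_t^i(z^*)$ and inserting $\pm f_t^i(y(t))$ to split it into two pieces: a \emph{disagreement} term $\sum_i \alpha_i(t)\bigl(f_t^i(x_i(t)) - f_t^i(y(t))\bigr)$ and a \emph{centralized} term $\sum_i \alpha_i(t)\bigl(f_t^i(y(t)) - f_t^i(z^*)\bigr)$. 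With $\alpha(t)=\vec q(t+1)$ and recalling that $\alpha(t)$ is stochastic, the disagreement term is controlled by $L$-Lipschitzness (Assumption~\ref{Assumption:Function}) as $\sum_i \alpha_i(t) L\,\norm{x_i(t)-y(t)} \le L\,\zeta(t)$, whose sum over $t$ is exactly the quantity bounded in \cref{lem:sublinearity}. This immediately yields $\mathcal O(\ln T)$ contributions feeding into $A_1$ and $A_2$.

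The substance of the argument is the centralized term. Here I would use $\rho$-strong convexity together with the update rule \eqref{eq:update_y}, namely $y(t+1)=y(t)-\eta(t)\vec q(t+1)^\top \vec g(t)$. The standard online-gradient-descent potential argument applies: consider $\norm{y(t+1)-z^*}^2 = \norm{y(t)-z^*}^2 - 2\eta(t)\,\vec q(t+1)^\top\vec g(t)\,(y(t)-z^*) + \eta(t)^2\norm{\vec q(t+1)^\top \vec g(t)}^2$. The subgradient inner product $\vec q(t+1)^\top\vec g(t)\,(y(t)-z^*)$ must be related to the regret surrogate $\sum_i \alpha_i(t)\bigl(f_t^i(y(t))-f_t^i(z^*)\bigr)$; because each $g_i(t)\in\partial f_t^i(x_i(t))$ is evaluated at $x_i(t)$ rather than at $y(t)$, strong convexity gives $f_t^i(z^*)\ge f_t^i(x_i(t))+g_i(t)(z^*-x_i(t))+\tfrac{\rho}{2}\norm{z^*-x_i(t)}^2$, and I would again insert $\pm f_t^i(y(t))$ and $\pm x_i(t)$ to convert everything into (i) the centralized term I want, (ii) extra disagreement terms $\propto L\norm{x_i(t)-y(t)}$ absorbed via \cref{lem:sublinearity}, and (iii) the strong-convexity quadratic. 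Summing the rearranged potential inequality over $t=1,\dots,T$ telescopes the $\norm{y(t)-z^*}^2$ terms; the $\tfrac{\rho}{2}\norm{y(t)-z^*}^2$ gain from strong convexity is designed to cancel against the coefficient produced by the choice $\eta(t)=\tfrac{1}{\rho t}$, exactly as in the strongly convex OGD analysis of \cite{akbari2015distributed}.

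After telescoping, the leftover terms are: a boundary term $\tfrac{\rho}{2}\norm{y(0)-z^*}^2$ and $\rho C_1\norm{y(0)-z^*}$-type contributions feeding $A_1$; the gradient-norm-squared sum $\sum_t \eta(t)\tfrac{L^2}{2}=\tfrac{L^2}{2\rho}\sum_t \tfrac1t = \mathcal O(\ln T)$ contributing $\tfrac{L^2}{2\rho}$ to $A_2$ and $A_3$; and the cross terms where a disagreement factor $\norm{x_i(t)-y(t)}$ multiplies an $\eta(t)$ or another $\ln$-growing factor, which is what produces the $(1+\ln T)^2$ term with coefficient $A_3=\tfrac{L^2}{2\rho}+LC_2$. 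Concretely, products of the form $\sum_t \eta(t)\cdot L\,\zeta(t)$ or $L\sum_t \zeta(t)$ combined with an $\mathcal O(\ln T)$ factor give the squared-logarithm growth, using \eqref{eq:sum_of_steps} and \cref{lem:sublinearity}. I would then collect constants to match the stated $A_1,A_2,A_3$ in \eqref{eq:a1a2a3}.

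The main obstacle I anticipate is the bookkeeping around the fact that the subgradients $g_i(t)$ are taken at the individual iterates $x_i(t)$, not at the consensus point $y(t)$: making the potential-function argument close requires carefully routing every mismatch $f_t^i(x_i(t))$ versus $f_t^i(y(t))$ and $x_i(t)$ versus $y(t)$ through the Lipschitz/strong-convexity inequalities so that each residual is either the target centralized term or a disagreement term already summable via \cref{lem:sublinearity}. Getting the $\tfrac{\rho}{2}\norm{y(t)-z^*}^2$ cancellation to work cleanly with $\eta(t)=\tfrac1{\rho t}$ while simultaneously tracking which cross-products escalate a single $\ln T$ into $(\ln T)^2$ is the delicate accounting step; the rest reduces to the harmonic-sum and geometric-sum estimates already established in \eqref{eq:sum_of_steps} and \eqref{eq:sum_of_steps_2}.
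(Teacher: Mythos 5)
Your proposal follows essentially the same route as the paper's proof: decompose the regret through the consensus sequence $y(t)$, absorb the disagreement terms $\norm{x_i(t)-y(t)}$ via the Lipschitz bound and \cref{lem:sublinearity}, run the strongly convex online-gradient-descent potential argument on $\norm{y(t)-z^*}^2$ with the update rule \eqref{eq:update_y} so that the $-\tfrac{\rho}{2}\norm{y(t)-z^*}^2$ from strong convexity cancels the non-telescoping residue of $\eta(t)=\tfrac{1}{\rho t}$, and track the $(1+\ln T)^2$ terms arising from the drift bound on $\norm{y(T+1)-z^*}$ and the cross products $\sum_t \zeta(t)\norm{y(t)-z^*}$. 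The only cosmetic difference is that the paper applies strong convexity directly at $x_j(t)$ and then splits $x_j(t)-z^*$ into $(x_j(t)-y(t))+(y(t)-z^*)$, whereas you insert $\pm f_t^i(y(t))$ first; the resulting bookkeeping is equivalent.
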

\begin{proof}
Let $z^*\in Y_T^{\beta, \gamma}$ and $\alpha(t)=\vec{q}(t+1)$. By definition of $\textsf{Reg}_{\alpha, T}$,
\begin{align*}
    &\sum_{t=1}^T\left\{\sum_{j=1}^R\alpha_j(t)(f_t^j(x_j(t))-f_t^j(z^*))\right\}\\
    &\leq \sum_{t=1}^T\left\{\sum_{j=1}^R\alpha_j(t)\left(\left\langle g_j(t), x_j(t)-z^*\right\rangle-\frac{\rho}{2}\norm{x_j(t)-z^*}^2\right)\right\}\\
    &= \sum_{t=1}^T\left\{\sum_{j=1}^R\alpha_j(t)\left\langle g_j(t), x_j(t)-y(t)\right\rangle\right.\\
    &\quad+\left.\sum_{j=1}^R\alpha_j(t)\left\langle g_j(t), y(t)-z^*\right\rangle\right.\\
    &\quad-\left.\frac{\rho}{2}\sum_{j=1}^R\alpha_j(t)\norm{x_j(t)-y(t)+y(t)-z^*}^2\right\}\\
    &\leq \sum_{t=1}^T\left\{\sum_{j=1}^R\alpha_j(t) L\norm{x_j(t)-y(t)}\right.\nonumber\\
    &\quad-\left.\frac{\rho}{2}\sum_{j=1}^R\alpha_j(t)(\norm{x_j(t)-y(t)}^2+\norm{y(t)-z^*}^2\right.\\
    &\quad+\left. 2\langle x_j(t)-y(t), y(t)-z^*\rangle)\right.\\
    &\quad\left.+\sum_{j=1}^R\alpha_j(t)\left\langle g_j(t), y(t)-z^*\right\rangle\right\}\\
    &\leq L\sum_{t=1}^T\left\{\sum_{j=1}^R\alpha_j(t)\norm{x_j(t)-y(t)}\right\}\nonumber\\
    &\quad+\sum_{t=1}^T\left\{\left\langle \sum_{j=1}^R\alpha_j(t)g_j(t), y(t)-z^*\right\rangle\right\}\nonumber\\
    &\quad-\frac{\rho}{2}\sum_{t=1}^T\left\{\norm{y(t)-z^*}^2\right.\\
    &\quad\left.+2\sum_{j=1}^R\alpha_j(t)\langle x_j(t)-y(t), y(t)-z^*\rangle\right\}\stepcounter{equation}\tag{\theequation}\label{eq:strongcvx}
\end{align*}
\subsection{Bounding the first term of \eqref{eq:strongcvx}}
The first term can be bounded directly as follows:
\begin{align*}
    &L\sum_{t=1}^T\left(\sum_{j=1}^R \alpha_j(t)\norm{x_j(t)-y(t)}\right)\leq L\sum_{t=1}^T\sum_{j=1}^R \alpha_j(t)\zeta(t)\\
    &=L\sum_{t=1}^T\zeta(t)\stackrel{\eqref{eq:lem-sublinearity}}{\leq} LC_1 + LC_2(1+\ln T)\stepcounter{equation}\tag{\theequation}\label{eq:1_bounded}
\end{align*}
\subsection{Bounding the second and third term of \eqref{eq:strongcvx}}
For non-constrained optimization,
\begin{align*}
    &\norm{y(t+1) - z^*}^2-\norm{y(t) - z^*}^2\\
    &= \norm{y(t) - \eta(t)\langle q(t+1), g(t)\rangle - z^*}^2-\norm{y(t) - z^*}^2\\
    &= \norm{\eta(t)\langle q(t+1), g(t)\rangle}^2\\
    &\quad- 2\eta(t)\langle\langle q(t+1), g(t)\rangle,y(t)-z^*\rangle\\
    &\leq \eta(t)^2L^2\norm{q(t+1)}_1^2 \nonumber\\ &\quad- 2\eta(t)\langle\langle q(t+1), g(t)\rangle,y(t)-z^*\rangle\\
    &= \eta(t)^2L^2 - 2\eta(t)\langle\langle q(t+1), g(t)\rangle,y(t)-z^*\rangle
\end{align*}
So,
\begin{align}\label{eq:27}
  &\langle\langle q(t+1), g(t)\rangle,y(t)-z^*\rangle \nonumber\\&\leq \frac{\eta(t)}{2} L^2  + \frac{\norm{y(t) - z^*}^2}{2\eta(t)}  - \frac{\norm{y(t+1) - z^*}^2}{2\eta(t)}  
\end{align}
For $\alpha(t) = \vec q(t+1)$, as mentioned in \cref{thm:bigthm}, the second term of \eqref{eq:strongcvx} is 
\begin{align*}
  &\sum_{t=1}^T\left(\left\langle \sum_{j=1}^R\alpha_j(t)g_j(t), y(t)-z^*\right\rangle\right)\\
  &=  \sum_{t=1}^T\left(\left\langle \langle q(t+1), g(t)\rangle, y(t)-z^*\right\rangle\right)\\
  &\stackrel{\eqref{eq:27}}{\leq} \sum_{t=1}^T\left(\frac{\eta(t)}{2} L^2  + \frac{\norm{y(t) - z^*}^2}{2\eta(t)}  - \frac{\norm{y(t+1) - z^*}^2}{2\eta(t)}\right)\\
  &\leq \sum_{t=1}^T\left(\frac{\eta(t)}{2} L^2  + \frac{\norm{y(t) - z^*}^2}{2\eta(t)}- \frac{\norm{y(t+1) - z^*}^2}{2\eta(t+1)}\right.\nonumber\\ &\quad+ \left.\frac{\norm{y(t+1) - z^*}^2}{2\eta(t+1)}  - \frac{\norm{y(t+1) - z^*}^2}{2\eta(t)}\right)\\
  &= \frac{L^2}{2}\sum_{t=1}^T\eta(t)+\sum_{t=1}^T\left( \frac{\norm{y(t) - z^*}^2}{2\eta(t)}- \frac{\norm{y(t+1) - z^*}^2}{2\eta(t+1)}\right)\\
  &\quad+\sum_{t=1}^T\left( \frac{\norm{y(t+1) - z^*}^2}{2\eta(t+1)}  - \frac{\norm{y(t+1) - z^*}^2}{2\eta(t)}\right)\\
  &\leq \frac{L^2}{2}\sum_{t=1}^T\eta(t)+ \frac{\norm{y(1) - z^*}^2}{2\eta(1)}\\
  &\quad+\sum_{t=1}^T\norm{y(t+1) - z^*}^2\left(\frac{1}{2\eta(t+1)}  - \frac{1}{2\eta(t)}\right)\\
  &\leq\frac{L^2}{2\rho}(1+\ln T)+ \frac{\rho}{2}\norm{y(0) - z^*}^2+\frac{\rho}{2}\sum_{t=1}^T\norm{y(t+1) - z^*}^2\stepcounter{equation}\tag{\theequation}\label{eq:second_term}
\end{align*}
The last statement holds because, by assumption, $\eta(0)=0\implies y(1)=y(0)$.
Considering the second and third term of \eqref{eq:strongcvx} jointly, 
\begin{align*}
  &\sum_{t=1}^T\left(\left\langle \sum_{j=1}^R\alpha_j(t)g_j(t), y(t)-z^*\right\rangle\right)-\frac{\rho}{2}\sum_{t=1}^T\left\{\norm{y(t)-z^*}^2\right.\\
    &\quad\left.+2\sum_{j=1}^R\alpha_j(t)\langle x_j(t)-y(t), y(t)-z^*\rangle\right\}\nonumber\\
  &\stackrel{\eqref{eq:second_term}}{\leq}\frac{L^2}{2\rho}(1+\ln T)+ \frac{\rho}{2}\norm{y(0) - z^*}^2\\
  &\quad+\frac{\rho}{2}\sum_{t=1}^T\left(\norm{y(t+1) - z^*}^2-\norm{y(t)-z^*}^2\right)\\
  &\quad-\rho\sum_{t=1}^T\left\{\sum_{j=1}^R\alpha_j(t)\langle x_j(t)-y(t), y(t)-z^*\rangle\right\}\\
  &=\frac{L^2}{2\rho}(1+\ln T)+ \frac{\rho}{2}\norm{y(T+1) - z^*}^2\\
  &\quad-\rho\sum_{t=1}^T\left\{\sum_{j=1}^R\alpha_j(t)\langle x_j(t)-y(t), y(t)-z^*\rangle\right\}\\
  &\leq\frac{L^2}{2\rho}(1+\ln T)+ \frac{\rho}{2}\norm{y(T+1) - z^*}^2\\
  &\quad+\rho\sum_{t=1}^T\left\{\sum_{j=1}^R\alpha_j(t) \norm{x_j(t)-y(t)}\norm{ y(t)-z^*}\right\}\\
  &\leq\frac{L^2}{2\rho}(1+\ln T)+ \frac{\rho}{2}\norm{y(T+1) - z^*}^2\\
  &\quad+\rho\sum_{t=1}^T\left\{\sum_{j=1}^R\alpha_j(t) \zeta(t)\norm{ y(t)-z^*}\right\}\\
  &\leq\frac{L^2}{2\rho}(1+\ln T)+ \frac{\rho}{2}\norm{y(T+1) - z^*}^2\\
  &\quad+\rho\sum_{t=1}^T\zeta(t)\norm{ y(t)-z^*}\stepcounter{equation}\tag{\theequation}\label{eq:second_and_third_term_final}
\end{align*}
Observe,
\begin{align*}
    y(s+1) - y(s)&=  - \eta(s) \langle q(s+1), g(s)\rangle\\
    \sum_{s=1}^t(y(s+1) - y(s))&=  -\sum_{s=1}^t\eta(s) \langle q(s+1), g(s)\rangle\\
    y(t+1) &= y(0)  -\sum_{s=1}^t \eta(s) \langle q(s+1), g(s)\rangle
\end{align*}
Subtracting $z^*$ on both sides and using the triangle law,
\begin{align*}
    \norm{y(t+1) -z^*} 
    &\leq \norm{y(0) -z^*}\\
    &\quad+\sum_{s=1}^t \eta(s) \sum_{j=1}^R q_j(s+1) \norm{g_j(s)}\\
    &\leq \norm{y(0) -z^*} +L\sum_{s=1}^t \eta(s)\\
    &\leq \norm{y(0) -z^*} +\frac{L}{\rho}(1+\ln t)\stepcounter{equation}\tag{\theequation}\label{eq:third_term_final}
\end{align*}
Bounding the second term of \eqref{eq:second_and_third_term_final}, 
\begin{align*}
  \norm{y(T+1) - z^*}^2&\stackrel{\eqref{eq:third_term_final}}{\leq}\left(\norm{y(0) -z^*} +\frac{L}{\rho}(1+\ln T)\right)^2\\
  &=\norm{y(0) -z^*}^2+\frac{L^2}{\rho^2}(1+\ln T)^2\\
  &\quad+\frac{2L}{\rho}\norm{y(0) -z^*}(1+\ln T)
\end{align*}
Bounding the third term of \eqref{eq:second_and_third_term_final}, 
\begin{align*}
    &\sum_{t=1}^T\zeta(t)\norm{ y(t)-z^*} \\&\stackrel{\eqref{eq:third_term_final}}{\leq}\sum_{t=1}^T\zeta(t)\left(\norm{y(0) -z^*} +\frac{L}{\rho}(1+\ln t)\right)\\
    &\stackrel{\eqref{eq:lem-sublinearity}}{\leq}(C_1+C_2(1+\ln T))\left(\norm{y(0) -z^*} +\frac{L}{\rho}(1+\ln T)\right)
\end{align*}
So, \eqref{eq:second_and_third_term_final}~(equivalently, the second and third terms of \eqref{eq:strongcvx}) is bounded by:
\begin{align}\label{eq:2_and_3_bounded}
  &\frac{L^2}{2\rho}(1+\ln T)+\frac{\rho}{2}\left(\norm{y(0) -z^*}^2+\frac{L^2}{\rho^2}(1+\ln T)^2\right.\nonumber\\
  &\quad+\left.\frac{2L}{\rho}\norm{y(0) -z^*}(1+\ln T)\right)\nonumber\\&\quad+\rho(C_1+C_2(1+\ln T))\left(\norm{y(0) -z^*} +\frac{L}{\rho}(1+\ln T)\right)
\end{align}

Combining the results of \eqref{eq:1_bounded} and \eqref{eq:2_and_3_bounded}, we complete the proof of \cref{thm:bigthm}.

\end{proof}
We now present the main result of our paper, i.e., the sublinearity of agent's regret in \cref{thm:2}. 

\begin{theorem}[Sublinear Agent's Regret Bound]\label{thm:2}
  Under \cref{Assumption:Function} and \cref{Assumption:Network}, with a learning rate $\eta(t)=\inv{\rho t}$, the regret of agent $i\in\mathcal{R}$ defined in \eqref{eq:agentregret} with $\alpha(t)=\vec{q}(t+1)$, defined in \eqref{prop:Phi_prop}, is sublinear. Precisely,
  \begin{align*}
    \textsf{Reg}^i_{\alpha, T}&\leq B_1 + B_2(1+\ln T) + B_3(1+\ln T)^2
\end{align*}
where
\begin{equation}
    \begin{split}
        B_1&=3LC_1+\rho C_1\norm{y(0) -z^*}+\frac{\rho}{2}\norm{y(0) -z^*}^2\\
        B_2&=L(C_1+3C_2)+\frac{L^2}{2\rho}+(L+\rho C_2)\norm{y(0) -z^*}\\
        B_3&=\frac{L^2}{2\rho}+LC_2
    \end{split}
\end{equation}
and $C_1$ and $C_2$ are the constants mentioned in \cref{lem:sublinearity} and $z^*\in Y_T^{\beta, \gamma}$. 
\end{theorem}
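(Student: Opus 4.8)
The plan is to obtain \cref{thm:2} as an almost immediate corollary of \cref{thm:bigthm}, by controlling only the discrepancy between a single agent's trajectory and the per-agent trajectories that appear in the network regret. First I would subtract the two definitions \eqref{eq:agentregret} and \eqref{eq:networkregret}, relabelling the inner summation index from $i$ to $j$ so that it does not clash with the fixed agent $i$ of the theorem. The two $z^*$-terms are identical and cancel, leaving
\begin{align*}
\textsf{Reg}^i_{\alpha,T} - \textsf{Reg}_{\alpha,T} = \sum_{t=1}^T\sum_{j=1}^R \alpha_j(t)\left(f_t^j(x_i(t)) - f_t^j(x_j(t))\right).
\end{align*}

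Next I would bound each summand through the consensus point $y(t)$ rather than comparing $x_i(t)$ and $x_j(t)$ directly. Using $L$-Lipschitzness from \cref{Assumption:Function}, the triangle inequality, and the fact that the bound $\zeta(t)$ of \cref{lem:xi-yt_ub} is independent of the agent index (so it applies simultaneously to $x_i$ and $x_j$),
\begin{align*}
f_t^j(x_i(t)) - f_t^j(x_j(t)) \leq L\norm{x_i(t)-x_j(t)} \leq L\left(\norm{x_i(t)-y(t)}+\norm{y(t)-x_j(t)}\right) \leq 2L\zeta(t).
\end{align*}
Since $\alpha(t)=\vec q(t+1)$ is stochastic, $\sum_{j=1}^R\alpha_j(t)=1$, so the inner sum collapses and the entire difference is at most $2L\sum_{t=1}^T\zeta(t)$. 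Invoking \cref{lem:sublinearity} gives $\textsf{Reg}^i_{\alpha,T}-\textsf{Reg}_{\alpha,T}\leq 2L\bigl(C_1+C_2(1+\ln T)\bigr)$.

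Finally I would add this correction to the network regret bound of \cref{thm:bigthm} and regroup by powers of $(1+\ln T)$. The constant term acquires an extra $2LC_1$, so $B_1=A_1+2LC_1$; the coefficient of $(1+\ln T)$ acquires $2LC_2$, so $B_2=A_2+2LC_2$; and the quadratic coefficient is untouched, so $B_3=A_3$. Substituting the expressions for $A_1,A_2,A_3$ from \eqref{eq:a1a2a3} reproduces exactly the stated $B_1,B_2,B_3$, which completes the proof.

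I do not expect a genuine obstacle here: the argument is a Lipschitz-plus-triangle-inequality reduction to \cref{thm:bigthm}. The only point requiring care is to route the comparison between the two trajectories through $y(t)$ and to invoke the index-independence of $\zeta(t)$, so that both $\norm{x_i(t)-y(t)}$ and $\norm{y(t)-x_j(t)}$ are simultaneously controlled by the same \cref{lem:xi-yt_ub} estimate.
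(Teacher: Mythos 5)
Your proposal is correct and follows essentially the same route as the paper: the paper likewise reduces $\textsf{Reg}^i_{\alpha,T}$ to $\textsf{Reg}_{\alpha,T}$ plus a correction term by inserting $\pm f_t^j(y(t))$ and $\pm f_t^j(x_j(t))$, bounds that correction by $L\sum_t\bigl(\sum_j\alpha_j(t)\norm{x_j(t)-y(t)}+\norm{x_i(t)-y(t)}\bigr)\leq 2LC_1+2LC_2(1+\ln T)$ via \cref{lem:sublinearity}, and adds it to the bound of \cref{thm:bigthm}. Your rearrangement through $\norm{x_i(t)-x_j(t)}$ and the triangle inequality is the same computation, and your resulting constants $B_1,B_2,B_3$ match the statement exactly.
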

\begin{proof}
Let $z^*\in Y_T^{\beta, \gamma}$ and $\alpha(t)=\vec{q}(t+1)$. By definition of $\textsf{Reg}_{\alpha, T}^i$, 
\begin{align*}
    &\sum_{t=1}^T\left(\sum_{j=1}^R \alpha_j(t) f_t^j(x_i(t))-\sum_{j=1}^R \alpha_j(t)f_t^j(z^*)\right)\\
    &= \sum_{t=1}^T\left(\sum_{j=1}^R \alpha_j(t)(f_t^j(x_i(t))- f_t^j(y(t)))\right.\\
    &\left.+\sum_{j=1}^R \alpha_j(t)(f_t^j(y(t))-f_t^j(x_j(t))+f_t^j(x_j(t))-f_t^j(z^*))\right)\\
    &\stackrel{\eqref{eq:networkregret}}{\leq} \textsf{Reg}_{\alpha, T} \\
    &\quad+L\sum_{t=1}^T\left(\sum_{j=1}^R \alpha_j(t)\norm{x_j(t)-y(t)}+ \norm{x_i(t)-y(t)}\right)
\end{align*}
From \cref{thm:bigthm},
\begin{align}\label{eq:final_part1}
    \textsf{Reg}_{\alpha, T} \leq A_1 + A_2(1+\ln T) + A_3(1+\ln T)^2
\end{align}
for $A_1$, $A_2$ and $A_3$ described in \eqref{eq:a1a2a3}. Further,
\begin{align*}
    &L\sum_{t=1}^T\left(\sum_{j=1}^R \alpha_j(t)\norm{x_j(t)-y(t)}+ \norm{x_i(t)-y(t)}\right)\\
    &\stackrel{\eqref{eq:lem-sublinearity}}{\leq} 2LC_1 + 2LC_2(1+\ln T)\stepcounter{equation}\tag{\theequation}\label{eq:final_part2}
\end{align*}{}
where $\zeta(t)$ is defined in \cref{lem:xi-yt_ub}. Combining \eqref{eq:final_part1} and \eqref{eq:final_part2}, we get the desired result.

\end{proof}
\section{NUMERICAL EXPERIMENTS}
\label{sec:numerical_experiments}

We provide an experiment to verify our algorithm. Motivated by \cite{hosseini2013online}, we consider a network of $N$ sensors. All of these sensors observe a vector $ x \in \R^d$, which is randomly chosen. Each sensor $i \in [N]$ measures a quantity $z_i(t) \in \R^{p_i}$ at time $t$. We assume that each measurement is associated with some noise. Formally, each sensor is modelled as a linear function of $x$, i.e. $z_i(t) = H_i x + v_i$. Here, $H_i \in \R^{p_i\times d}$ is an observation matrix, with a bounded norm, and $v_i$ represents the the noise. The local estimate for $x$, given by $\hat x_i$ is used to compute a cost function. The cost function at time $t$ is given by
\begin{align*}
    f_i(t) = \frac{1}{2} \norm{z_i(t) - H_i \hat x_i}^2
\end{align*}
Clearly, the cost functions satisfy \cref{Assumption:Function}. The underlying network and locations of the adversaries are chosen such that~\cref{Assumption:Network} is satisfied. At each step, the non-faulty agents attempt to minimize regret by following~\cref{alg:opt}. The vector $\vec q(t)$ is calculated similar to the method described in~\cite{sundaram2018distributed}, and our regret is estimated as described in \eqref{eq:agentregret} and \eqref{eq:networkregret} . In an offline setting, the optimal point is given by
\begin{align*}
    x_i^* = \frac{1}{T} \sum_{t=1}^{T} \left(\sum_{i=1}^N H_i^\top H_i\right)^{-1}\left(\sum_{i=1}^{N} H_i^\top z_i(t)\right).
\end{align*}
If the noise characteristics for $v_i$ were known beforehand, it would have been possible to solve the problem in an offline mode. 
To complete our problem setup, we assume that a fixed set of agents have been compromised, either by an external attacker, or due to some adverse environmental conditions. These agents are modelled as adversaries. We do not assume any knowledge of the location of these agents.


For the purposes of our numerical simulation, we assume that $x \in \R$. For each sensor, $H_i \in \R$ is chosen from a uniform distribution ranging $(0,2)$, and $v_i$ is sampled from a normal random variable at each time instant. We consider a network of 100 agents, with 15 adversarial agents. We construct a $(2F+1)$-robust graph using the method proposed in \cite{zhang2012robustness}. The adversarial agents send conflicting and incorrect information to their neighbours, sampled from a uniform distribution. \cref{fig:Average_sublinear} shows the agent regret and network regret as defined in \eqref{eq:agentregret} and \eqref{eq:networkregret} respectively.


\begin{figure}
    \centering
    \includegraphics[width = 0.5 \textwidth]{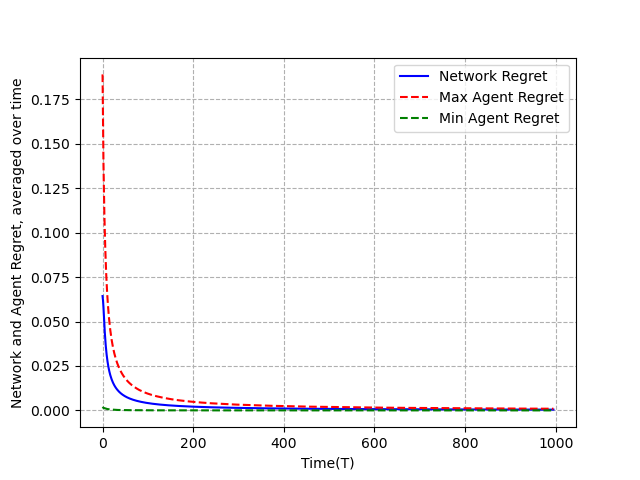}
    \caption{The Network Regret averaged over time($R(T)/T$) is presented, based on the setup described in section \ref{sec:numerical_experiments}. We also present the agent regrets for the agents with maximum and minimum regret, averaged over time. The regret is sublinear in nature, as expected based on the upper bound presented in \cref{sec:main}.}
    \label{fig:Average_sublinear}
\end{figure}

\section{CONCLUSION}\label{sec:conclusion}

In this work, we discuss the problem of distributed online optimization in the presence of Byzantine adversaries. We defined the notion of regret for this case and proved that our algorithm results in a sublinear regret bound. Currently, the coefficients that define the convex combination for the local objective functions are time dependent in nature. An interesting direction of research involves the identification of an algorithm resulting in a time invariant convex combination of cost functions. We also assume that the objective functions considered in this work are strongly convex and the regret bound obtained is $\mathcal{O}((\ln T)^2)$. A different research direction could be to consider non-strong convex functions and attain sublinear regret of the form $\mathcal{O}(T^{1-\delta}), \delta>0$.


\bibliographystyle{alpha}
\bibliography{egbib}

\end{document}